\documentclass[a4paper,11pt]{amsart}
\usepackage{fancyhdr}
\usepackage{amsmath}
\usepackage{dsfont}
\usepackage{hyperref}
\usepackage[mathscr]{eucal}
\usepackage[cp1251]{inputenc}
\usepackage[english]{babel}
\usepackage{enumerate,float,indentfirst}
\usepackage{graphicx}
\usepackage{xcolor}
\usepackage{latexsym,a4,mathrsfs,amsthm,amsmath,amssymb,url}
\usepackage{amsfonts}
\usepackage{amssymb}
\usepackage{caption}

\usepackage{stackengine}

\numberwithin{equation}{section}
\newtheorem{lemma}{Lemma}

\newtheorem{corollary}{Corollary}
\newtheorem{proposition}{Proposition}

\begin{document}

\vspace{0in}

\title[Submatrices with the best-bounded inverses]{\bf Submatrices with the best-bounded inverses: the equality criterion for $\mathbb{R}^{n \times 2}$ }

\author[Yu. Nesterenko]{Yuri Nesterenko}
%\address{ Individual contributor }
\email{yuri.r.nesterenko@gmail.com}

\begin{abstract}
The long-standing hypothesis formulated by Goreinov, Tyrtyshnikov and Zamarashkin \cite{GTZ1997} has recently been solved positively by Sengupta and Pautov \cite{SP2026} in the case of two-column matrices. In this paper, we complement their elegant proof with the equality criterion.
\end{abstract}

\maketitle

\thispagestyle{empty}
\vspace{-5truemm}
\section{Introduction}

The long-standing hypothesis formulated by Goreinov, Tyrtyshnikov and Zamarashkin \cite{GTZ1997} has recently been solved positively in the case of two-column matrices by  Sengupta and Pautov \cite{SP2026}.
In this paper, we reproduce their elegant proof complementing it with the equality criterion.

Throughout, we will use the notion of square of a vector from $\mathbb R^{2}$ defined in the same way as for complex numbers:
\begin{equation*}
(\,\xi, \; \eta \,) \mapsto (\, \xi^2-\eta^2, \; 2\xi\eta \,).
\end{equation*}

\begin{proposition}\label{pr}
For every $n \geq 3$ and an arbitrary real $n \times 2$ matrix with orthonormal columns, there exists a $2 \times 2$ submatrix such that the spectral norm of its inverse does not exceed $\sqrt{n}$. The bound is tight and the corresponding equality for a fixed $n$ occurs if and only if the squares of the rows of the matrix are clustered into three non-empty groups of identical vectors of magnitudes
\begin{equation*}
\frac{1}{2n} + \frac{1}{2p}, \quad \frac{1}{2n} + \frac{1}{2q} \quad \text{ and } \quad \frac{1}{2n} + \frac{1}{2r},
\end{equation*}
where $p, \, q, \, r > 0, \; p + q + r = n$ are the corresponding cluster sizes.
\end{proposition}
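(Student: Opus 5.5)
Write each row of the matrix as $v_i\in\mathbb R^2$ and let $w_i=v_i^2$ be its square in the sense above, with $a_i=|w_i|=|v_i|^2$. The approach is to turn orthonormality into two linear conditions on the $w_i$, turn ``some $2\times2$ inverse has norm $\le\sqrt n$'' into a pairwise inequality, and then sum those pairwise inequalities over a well-chosen set of pairs.

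\textbf{Reformulation.} Since $v v^{T}=\tfrac12\bigl(|v|^2 I+R(v^2)\bigr)$, where $R(\xi,\eta)=\begin{pmatrix}\xi&\eta\\ \eta&-\xi\end{pmatrix}$, orthonormality of the columns is equivalent to
\begin{equation*}
\sum_i a_i=2,\qquad \sum_i w_i=0 .
\end{equation*}
For the submatrix with rows $i,j$ we have $\det^2=\tfrac12\bigl(a_ia_j-\langle w_i,w_j\rangle\bigr)$, and the squared singular values sum to $a_i+a_j$. Hence $\sigma_{\min}^2\ge 1/n$ holds if and only if
\begin{equation*}
f_{ij}:=\tfrac12\bigl(a_ia_j-\langle w_i,w_j\rangle\bigr)-\tfrac1n(a_i+a_j)+\tfrac1{n^2}\ \ge 0
\quad\text{and}\quad a_i+a_j\ge \tfrac2n .
\end{equation*}
The second condition will follow from the first on the pairs we use (or can be handled separately). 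So it suffices to find a pair with $f_{ij}\ge0$, and the natural route is to show that a suitable weighted sum of the $f_{ij}$ is nonnegative.

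Summing over all pairs does not work. Cauchy--Binet gives $\sum_{i<j}\det^2=1$, and then $\sum_{i<j}f_{ij}=(3-n)/(2n)\le 0$. The equality configuration, with three clusters, points instead to summing only over \emph{cross pairs} of a partition into three groups.

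\textbf{Cross-pair summation.} Partition the indices into three nonempty groups $P,Q,R$ of sizes $p,q,r$. Let $A,B,C$ be the group sums of the $a_i$ and $W_P,W_Q,W_R$ the group sums of the $w_i$. Because $W_P+W_Q+W_R=0$, the cross terms $\sum\langle w_i,w_j\rangle$ collapse to $-\tfrac12\bigl(|W_P|^2+|W_Q|^2+|W_R|^2\bigr)$. The cross sum of $f_{ij}$ is therefore an explicit function of $A,B,C$, $|W_P|,|W_Q|,|W_R|$ and $p,q,r$, subject to $A+B+C=2$ and $|W_\bullet|\le$ the corresponding group sum of the $a_i$.

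The partition will be chosen by angle. Sort the $w_i$ by argument and cut the circle into three arcs so that each group's vectors are angularly concentrated, which makes $|W_P|$ close to $A$, and so on. One must then show that some such cut makes the aggregated expression nonnegative. I would minimise over the $a_i$ for fixed $p,q,r$ and fixed triangle $W_P,W_Q,W_R$, and expect the minimum to be attained exactly when each group consists of identical vectors. In that case the expression is a function of the three side lengths of a closed triangle, and I expect it to be nonnegative with equality precisely at $A=\tfrac12+\tfrac p{2n}$ and its analogues, which matches the magnitudes $\tfrac1{2n}+\tfrac1{2p}$ in the statement.

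\textbf{Equality and tightness.} If equality $\sqrt n$ holds, every cross pair has $f_{ij}\le 0$ while their sum is $\ge0$. Hence all cross-pair $f_{ij}$ vanish and every inequality in the chain is tight. This forces identical vectors within each cluster, $|W_\bullet|$ equal to the group sums, and the stated magnitudes. Conversely, given such clusters whose sums close a triangle (the triangle inequalities hold since each side is $<1$ and the sides total $2$), one checks directly that cross pairs give $\sigma_{\min}^2=1/n$ and same-cluster pairs give $\det=0$; this proves tightness and sufficiency.

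\textbf{Main obstacle.} The hard step is choosing the three-arc partition, and the matching weights, so that the cross-pair sum is provably $\ge 0$ for an \emph{arbitrary} configuration, not just a clustered one. My first attempt would choose the cut minimising a potential such as $\sum(A-|W_P|)$, and then show that moving a boundary vector between adjacent groups cannot increase the deficit, which would reduce the problem to the clustered case.
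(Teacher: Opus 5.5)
Your reformulation is correct: orthonormality becomes $\sum_i a_i=2$, $\sum_i w_i=0$; $f_{ij}=\chi(1/n)$; and the Cauchy--Binet value is $(3-n)/(2n)$. Your sufficiency check is also correct. There is a genuine gap, however. The existence step is only conjectured, and the reduction you propose for it is false.

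In the clustered case with the cluster partition you have $|W_P|=A$, $|W_Q|=B$, $|W_R|=C$ and $A+B+C=2$. Then $\tfrac12(AB+BC+CA)+\tfrac14(A^2+B^2+C^2)=\tfrac14(A+B+C)^2=1$, so the cross-pair sum collapses to
\[
S=-1+\frac{pA+qB+rC}{n}+\frac{pq+qr+rp}{n^2},
\]
which is affine in $(A,B,C)$. It vanishes at $A=\tfrac12+\tfrac{p}{2n}$, $B=\tfrac12+\tfrac{q}{2n}$, $C=\tfrac12+\tfrac{r}{2n}$. This is an interior point of the admissible region $A+B+C=2$, $0<A,B,C<1$. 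Hence $S$ takes negative values unless $p=q=r$, and when $p=q=r$ it is identically zero and cannot pin down the magnitudes.

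A concrete instance: take $n=5$, a cluster of three identical rows with $A=0.1$, and two single rows with $B=C=0.95$. This is a valid orthonormal matrix. Its cluster partition is the only zero-deficit partition, so your potential selects it, and $S=-0.28$. The proposition itself still holds there, via the pair of single rows; only your certificate fails.

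There is a second, independent gap: $f_{ij}\ge 0$ does not imply $\sigma_{\min}^2\ge 1/n$. Two rows of small norm have $f_{ij}\approx 1/n^2>0$ while both singular values are small, and an averaging argument gives no control over which pair it produces. For the same reason, equality does not force $f_{ij}\le 0$ on every cross pair. The paper handles this by a case split with induction on $n$:
\begin{itemize}
\item If some row has $a_i\le 1/n$, it is rotated to $(b,0)$ and deleted, and the first column is rescaled by $1/\sqrt{1-b^2}$ to restore orthonormality. The inductive hypothesis together with Lemma~\ref{le} then yields a strict inequality.
\item Otherwise all $a_i>1/n$, so any pair with $f_{ij}\ge 0$ also has $a_i+a_j>2/n$.
\end{itemize}

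In the remaining case the missing idea is spectral rather than combinatorial. Let $W$ be the $n\times 2$ matrix with rows $w_i$, $z_i=a_i-\tfrac2n$, $G=WW^T-zz^T$, and $M=G+\tfrac{2}{n^2}E$ with $E$ the all-ones matrix. Then $M_{ij}=-2f_{ij}$, $G\mathbf 1=0$, $M\mathbf 1=\tfrac2n\mathbf 1$, $\operatorname{tr}G=\tfrac4n$, and $G$ has at most two positive eigenvalues. So $G$ has a unit eigenvector $u\perp\mathbf 1$ with eigenvalue $\lambda\ge\tfrac2n$.

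The paper finishes with Perron--Frobenius: if all $f_{ij}<0$, then $M>0$ entrywise, which forces $\lambda_1(M|_{\mathbf 1^\perp})<\tfrac2n$, a contradiction. Equivalently, since $M$ has constant row sums $\tfrac2n$,
\[
\sum_{i<j}(u_i-u_j)^2 f_{ij}=\tfrac12\Bigl(\lambda-\tfrac2n\Bigr)\ge 0 .
\]
So the nonnegatively weighted sum you were after does exist. Its weights, though, are $(u_i-u_j)^2$ for an eigenvector of $G$, not indicators of cross pairs of a pre-chosen angular partition.

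In the extremal configuration $u$ is constant on each cluster, which is why cross pairs looked natural. The three clusters are derived there, not chosen. Equality makes $M$ entrywise nonnegative with spectrum $\tfrac2n,\tfrac2n,\tfrac2n,0,\dots,0$. This forces the block form with blocks $\tfrac{2}{pn}E_p$, $\tfrac{2}{qn}E_q$, $\tfrac{2}{rn}E_r$, and hence the stated magnitudes and identical vectors within each cluster.
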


This yields the following equivalent statement about planar polygons conjectured in \cite{Nesterenko2024}.
\begin{corollary}\label{co}
For arbitrary vectors $w_1, \ldots, w_n, \, n \geq 3$ forming a planar polygon of perimeter 2
\begin{equation*}
\begin{split}
1) \, & \max_{i \neq j} (|w_i| + |w_j| - |w_i + w_j|) \geq \frac{2}{n}, \\
2) \, &\text{the bound is tight,} \\
3) \, & \text{equality holds} \iff \text{the multiset } \{ w_1, \ldots, w_n \} \text{ consists of three distinct} \\
& \text{vectors } x, y, z \text{ of magnitudes } \frac{1}{2n} + \frac{1}{2p}, \, \frac{1}{2n} + \frac{1}{2q}, \frac{1}{2n} + \frac{1}{2r},
\text{ respectively,} \\
& \text{corresponding to their numbers of occurrences } p, \, q, \, r > 0, \; p + q + r = n. 
\end{split}
\end{equation*}
\end{corollary}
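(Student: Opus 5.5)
The plan is to derive Corollary~\ref{co} from Proposition~\ref{pr} through the squaring map $a=(\xi,\eta)\mapsto a^2=(\xi^2-\eta^2,\,2\xi\eta)$. This map sends rows of orthonormal-column matrices onto closed polygons of perimeter $2$. Under it, the smallest singular value of a $2\times 2$ submatrix becomes exactly the quantity $|w_i|+|w_j|-|w_i+w_j|$.

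\emph{Step 1 (dictionary).} Let $A\in\mathbb R^{n\times 2}$ have rows $a_k=(\xi_k,\eta_k)$ and set $w_k=a_k^2$. Then $|w_k|=|a_k|^2$. The condition $A^TA=I$ reads $\sum\xi_k^2=\sum\eta_k^2=1$ and $\sum\xi_k\eta_k=0$. This is equivalent to
\begin{equation*}
\sum_k |w_k| = 2 \quad\text{and}\quad \sum_k w_k = \Bigl(\sum\xi_k^2-\sum\eta_k^2,\; 2\sum\xi_k\eta_k\Bigr)=0 ,
\end{equation*}
because the first condition gives the trace and the second gives the remaining two linear combinations. Hence the $w_k$ form a closed polygon of perimeter $2$. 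Conversely, every such polygon arises this way: choose any square root $a_k$ of each $w_k$, using $\xi_k^2=(|w_k|+\operatorname{Re}w_k)/2$ and so on, and the same identities give $A^TA=I$.

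\emph{Step 2 (singular values).} Write $w=a^2=(u,v)$. A direct check gives
\begin{equation*}
aa^T=\tfrac12\bigl(|w|\,I+M(w)\bigr), \qquad M(w)=\begin{pmatrix}u&v\\ v&-u\end{pmatrix}.
\end{equation*}
The map $M$ is linear, and $M(w)$ has eigenvalues $\pm|w|$. For the submatrix $B$ with rows $a_i,a_j$ this yields
\begin{equation*}
B^TB=\tfrac12\bigl((|w_i|+|w_j|)I+M(w_i+w_j)\bigr).
\end{equation*}
Its eigenvalues are therefore $\tfrac12\bigl(|w_i|+|w_j|\pm|w_i+w_j|\bigr)$. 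Consequently $\|B^{-1}\|_2^{-2}=\tfrac12\bigl(|w_i|+|w_j|-|w_i+w_j|\bigr)$, with the convention that a singular $B$ gives $0$.

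It follows that $\min_{i\ne j}\|B_{ij}^{-1}\|\le\sqrt n$ is equivalent to
\begin{equation*}
\max_{i\ne j}\bigl(|w_i|+|w_j|-|w_i+w_j|\bigr)\ge \tfrac 2n ,
\end{equation*}
and the two equality cases correspond to each other. Combined with Step~1, part 1) of the corollary is exactly the inequality of Proposition~\ref{pr}, and part 2) is its tightness.

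\emph{Step 3 (equality).} The equality criterion transfers verbatim: the squares of the rows are exactly the $w_k$, and their magnitudes are as stated. The only point needing care, which I expect to be the main (minor) obstacle, is to reconcile ``three non-empty groups of identical vectors'' with ``three \emph{distinct} vectors $x,y,z$''. Suppose two of the clusters coincided, say $x=y$. Then $(p+q)x+rz=0$, so the polygon degenerates to two opposite directions, and the magnitudes must satisfy
\begin{equation*}
(p+q)\Bigl(\tfrac1{2n}+\tfrac1{2p}\Bigr)=r\Bigl(\tfrac1{2n}+\tfrac1{2r}\Bigr)=\tfrac r{2n}+\tfrac12 .
\end{equation*}
Since all three clusters together have total length $2$, this forces each side to equal $1$, so $\tfrac r{2n}=\tfrac12$, i.e.\ $r=n$ and $p+q=0$, a contradiction. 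Hence the three cluster vectors are automatically distinct. Conversely, the corollary's configuration is literally the proposition's clustering, so the two criteria coincide.
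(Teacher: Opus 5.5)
Your proposal is correct and follows the paper's route. The paper also obtains the corollary from Proposition~\ref{pr} by passing to the squares of the rows and using identity \eqref{rel}; you prove that identity directly from the computation of $B^TB$ instead of citing it, and your closure-plus-perimeter argument that the three clusters are automatically distinct supplies a detail the paper leaves implicit.
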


The correspondence between these two formulations can be established by computing the squares of the rows of the matrix. In particular,
\begin{equation}\label{rel}
\sigma_2^2\left( \begin{matrix}
\text{-- } a_i \text{ --} \\
\text{-- } a_j \text{ --}
\end{matrix} \right) = \frac{|w_i| + |w_j| - | w_i + w_j |}{2},
\end{equation}
where $w_i = a_i^2$, $w_j = a_j^2$ and $\sigma_2$ is the smallest singular value of a $2 \times 2$ matrix (see \cite{Nesterenko2024}).

\section{Proof}

We will need the following lemma.
\begin{lemma}\label{le}
Let $v_1, v_2, v_3 \in \mathbb{R}^2$ be vectors with pairwise non-collinear squares. Then at least two of the three $2 \times 2$ matrices composed of these vectors as rows have pairwise different first (and second) right-singular vectors.
\end{lemma}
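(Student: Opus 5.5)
The plan is to translate right-singular vectors into the geometry of the squared rows. For a row vector $a=(\xi,\eta)$ one has
\[
a^{T}a=\frac{|a|^2}{2}\,I+\frac12\begin{pmatrix}\xi^2-\eta^2 & 2\xi\eta\\ 2\xi\eta & \eta^2-\xi^2\end{pmatrix}=\frac{|a|^2}{2}\,I+\frac12\,S(a^2),
\]
where for $u=(u_1,u_2)$ we set $S(u)=\begin{pmatrix}u_1&u_2\\u_2&-u_1\end{pmatrix}$. Hence for the $2\times 2$ matrix $M$ with rows $v_i,v_j$,
\[
M^{T}M=\frac{|v_i|^2+|v_j|^2}{2}\,I+\frac12\,S\bigl(v_i^2+v_j^2\bigr).
\]
The right-singular vectors of $M$ are the eigenvectors of $M^TM$, which are the eigenvectors of $S(v_i^2+v_j^2)$.

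Next I will record the spectral structure of $S(u)$ for $u\neq 0$. It has eigenvalues $\pm|u|$. The eigenvector for $+|u|$ is the line spanned by any square root of $u/|u|$, that is, the vector at half the polar angle of $u$. The eigenvector for $-|u|$ is the orthogonal line. Consequently, when the sum $s_{ij}=v_i^2+v_j^2$ is nonzero, the singular values of $M$ are distinct and the first and second right-singular vectors are determined up to sign by the direction of $s_{ij}$. Two such matrices therefore share their first (equivalently second) right-singular vector if and only if their sums $s_{ij}$ and $s_{kl}$ are positively collinear. If the sums are negatively collinear, the roles swap, so the first singular vectors are orthogonal and still different.

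Then the combinatorial part. Write $w_k=v_k^2$, so $w_1,w_2,w_3$ are nonzero and pairwise non-collinear. No sum $w_i+w_j$ vanishes, since otherwise $w_i=-w_j$ would be collinear; so all three matrices have well-defined singular vectors. Suppose the first singular vectors of all three matrices coincided. Then $w_1+w_2$, $w_1+w_3$ and $w_2+w_3$ would all be positive multiples of one unit vector $d$. Subtracting the first two gives $w_2-w_3\in\operatorname{span} d$, and with $w_2+w_3\in\operatorname{span} d$ this forces $w_2,w_3\in\operatorname{span} d$. So $w_2$ and $w_3$ would be collinear, a contradiction. Hence not all three coincide, and at least two of the matrices have different first, and thus different second, right-singular vectors.

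The only step needing care is the identification of the singular vectors with the half-angle of $v_i^2+v_j^2$. This includes checking that the sums are nonzero, so that the singular vectors are uniquely defined, and treating opposite directions as giving different singular vectors. The rest is elementary linear algebra in $\mathbb{R}^2$.
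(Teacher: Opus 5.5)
Your proof is correct and follows essentially the same route as the paper: you identify the first right-singular direction of the matrix with rows $v_i, v_j$ as the square root (half-angle) of $v_i^2+v_j^2$, and then observe that the three sums cannot all be collinear. The only difference is that you derive the half-angle identification directly from $M^TM=\frac{|v_i|^2+|v_j|^2}{2}I+\frac12 S(v_i^2+v_j^2)$, which also yields distinct singular values via $v_i^2+v_j^2\neq 0$, whereas the paper cites Blinn's explicit $2\times 2$ SVD formula and obtains distinctness from the non-orthogonality of the rows.
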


\textbf{Proof.} First, let us show that the mentioned right-singular vectors are uniquely defined up to signs. Indeed, since the square vectors are pairwise non-collinear, the vectors themselves are non-orthogonal and hence the singular values of the matrices are distinct.

By the explicit formula from \cite{B1996}, the singular value decomposition of matrix whose rows are $v_i$ and $v_j$ can be written as
\begin{equation*}
\left( \begin{matrix}
v_{i1} & v_{i2} \\
v_{j1} & v_{j2}
\end{matrix} \right) =
\left( \begin{matrix}
\cos \alpha & \sin \alpha \\
-\sin \alpha & \cos \alpha
\end{matrix} \right)
\left( \begin{matrix}
\sigma_1 & 0 \\
0 & \sigma_2
\end{matrix} \right) 
\left( \begin{matrix}
\cos \beta & \sin \beta \\
-\sin \beta & \cos \beta
\end{matrix} \right),
\end{equation*}
where
\begin{equation*}
2\beta = \mathrm{atan2}\,(\, 2 v_{i1} v_{i2} + 2 v_{j1} v_{j2}, \; v_{i1}^2 - v_{i2}^2 + v_{j1}^2 - v_{j2}^2 \,).
\end{equation*}
Thus, the first right-singular vector direction is given by the square root of the sum of squares of $v_i$ and $v_j$.

Since $v_1^2$, $v_2^2$ and $v_3^2$ are pairwise non-collinear, at least two of their sums $v_1^2 + v_2^2$, $v_1^2 + v_3^2$ and $v_2^2 + v_3^2$ are non-collinear too, which finalizes the proof of the lemma.

%\qedsymbol

\quad

We will now reproduce the proof from \cite{SP2026}, supplementing it with a consideration of the equality criterion, thereby proving Proposition \ref{pr}.

The base case $n = 3$ can be easily proven using formula \eqref{rel}. Let us assume that the statement is true for $n-1$ rows and fix the matrix
\begin{equation*}
A = \left( \begin{matrix}
a_{11} & a_{12} \\
a_{21} & a_{22} \\
\dots & \dots \\
a_{n1} & a_{n2}
\end{matrix} \right) \in \mathbb{R}^{n \times 2}, \quad A^T A = I.
\end{equation*}

Similarly to \cite{SP2026}, we consider the following two possibilities.

\textbf{Case A.} At least one row of the matrix $A$ has a small norm. Specifically,
\begin{equation*}
\exists i: a_{i1}^2 + a_{i2}^2 \leq \frac{1}{n}.
\end{equation*}

Without loss of generality, assume $i = 1$. Let us show that in this case the corresponding inequality is not only true but also strict.

Introduce rotation matrix $P \in \mathbb{R}^{2 \times 2}$ such that
\begin{equation*}
B := AP = \left( \begin{matrix}
b & 0 \\
b_{21} & b_{22} \\
\dots & \dots \\
b_{n1} & b_{n2}
\end{matrix} \right), \text{ where } b^2 = b_{11}^2 + b_{12}^2 \leq \frac{1}{n}.
\end{equation*}
Proving the statement for matrix $B$ is equivalent to proving it for $A$. The case $b = 0$ follows immediately from the inductive hypothesis, so let us concentrate on the case $b \neq 0$.

Put $t = 1/\sqrt{1 - b^2} > 1$ and define the matrix
\begin{equation*}
\tilde{B} = \left( \begin{matrix}
t b_{21} & b_{22} \\
\dots & \dots \\
t b_{n1} & b_{n2}
\end{matrix} \right) \in \mathbb R^{(n-1) \times 2}.
\end{equation*}

The choice of $t$ yields
\begin{equation*}
\tilde{B}^T \tilde{B} = I.
\end{equation*}

Thus by the inductive hypothesis, there exists a $2 \times 2$ submatrix of $\tilde{B}$ with the spectral norm of its inverse less than or equal to $\sqrt{n-1}$. Assume that
\begin{equation*}
\tilde{B}_{ij} = \left( \begin{matrix}
t b_{i1} & b_{i2} \\
t b_{j1} & b_{j2}
\end{matrix} \right) =
\left( \begin{matrix}
b_{i1} & b_{i2} \\
b_{j1} & b_{j2}
\end{matrix} \right)
\left( \begin{matrix}
t & 0 \\
0 & 1
\end{matrix} \right)
\end{equation*}
is such a submatrix, with the smallest spectral norm of inverse among all the $2 \times 2$ submatrices.

Since $||\tilde{B}_{ij}^{-1}||_2 = \sigma_2^{-1}(\tilde{B}_{ij})$, applying the min--max characterization of singular values (see, e.g., \cite{HJ2012}), we get
\begin{equation*}
\sigma_2\left( \begin{matrix}
b_{i1} & b_{i2} \\
b_{j1} & b_{j2}
\end{matrix} \right) \geq \sigma_2(\tilde{B}_{ij}) \, \sigma_2\left( \begin{matrix}
1/t & 0 \\
0 & 1
\end{matrix} \right) \geq \sqrt{\frac{1}{n-1}} \times \frac{1}{t}.
\end{equation*}

This implies
\begin{equation}\label{ineq}
\sigma_2^2\left( \begin{matrix}
b_{i1} & b_{i2} \\
b_{j1} & b_{j2}
\end{matrix} \right) \geq \frac{1}{n-1} \times \frac{1}{t^2} = \frac{1-b^2}{n-1} \geq \frac{1 - 1/n}{n-1} = \frac{1}{n}.
\end{equation}

The equality in \eqref{ineq} is achieved if and only if the following three conditions are met.

(i) $b^2 = \frac{1}{n}$,

(ii) $\sigma_2(\tilde{B}_{ij}) = \frac{1}{\sqrt{n-1}}$,

(iii) vector $(\pm 1, 0)^T$ (being the smallest left-singular vector of matrix $\operatorname{diag}(1/t, 1)$) can be taken as a smallest right-singular vector of $\tilde{B}_{ij}$.

By the inductive hypothesis, the second condition implies that matrix $\tilde{B}$ satisfies the equality criterion from the Proposition \ref{pr}, and hence contains three rows satisfying Lemma \ref{le}. These rows form three $2 \times 2$ submatrices of $\tilde{B}$ with the smallest singular value equal to $1/\sqrt{n-1}$. By Lemma \ref{le},
there is at least one of these submatrices, for which $(\pm 1, 0)^T$ is not the smallest right-singular vector. Assume $\tilde{B}_{kl}$ is such a submatrix. Taking  $\tilde{B}_{kl}$ instead of $\tilde{B}_{ij}$ in the considerations above, we break point (iii) and hence establish the validity of the strict version of \eqref{ineq}
\begin{equation*}
\sigma_2\left(\begin{matrix}
b_{k1} & b_{k2} \\
b_{k1} & b_{l2}
\end{matrix} \right) > \frac{1}{\sqrt n}.
\end{equation*}

\textbf{Case B.} All the norms of the rows of the matrix $A$ are greater than $\frac{1}{\sqrt{n}}$:
\begin{equation}\label{assm}
a_{i1}^2 + a_{i2}^2 > \frac{1}{n} \text{ for all } i = 1, \ldots, n.
\end{equation}

Denote the rows of $A$ by $a_1, \ldots, a_n \in \mathbb{R}^2$ and their squares by $w_1, \ldots, w_n \in \mathbb{R}^2$.

By assumption \eqref{assm},
\begin{equation*}
|w_i| = ||a_i||^2 > \frac{1}{n}.
\end{equation*}

Moreover, since $A^TA = I$, the following holds
\begin{equation}\label{ws}
\sum_{i = 1}^{n} w_i = 0, \quad \sum_{i = 1}^{n} | w_i | = 2.
\end{equation}

Also, similar to \cite{SP2026} we have
\begin{equation}\label{sqdot}
(a_i, a_j)^2 = \frac{1}{2} | w_i | | w_j | + \frac{1}{2} (w_i, w_j), \text{ for all } \, i, j = 1, \ldots, n.
\end{equation}

Now we will prove that there exist $i$ and $j$, $i \neq j$ such that
\begin{equation}\label{trgt}
(a_i, a_j)^2 \leq (| w_i | - \frac{1}{n}) (| w_j | - \frac{1}{n}).
\end{equation}

Applying \eqref{sqdot} we get the equivalent inequality
\begin{equation*}
(w_i, w_j) + \frac{2}{n^2} \leq (| w_i | - \frac{2}{n}) (| w_j | - \frac{2}{n}).
\end{equation*}

Denote the values $|w_i| - \frac{2}{n}, i = 1, \ldots, n$ by $z_1, \ldots, z_n$. For them the following holds
\begin{equation*}
\sum_{i = 1}^{n} z_i = \sum_{i = 1}^{n} |w_i| - 2 = 0.
\end{equation*}

Thus, \eqref{trgt} is equivalent to
\begin{equation}\label{trgtnew}
(w_i, w_j) - z_i z_j + \frac{2}{n^2} \leq 0.
\end{equation}

Denote the left-hand side of \eqref{trgtnew} by $M_{ij}$ and assume that $M_{ij} > 0$ for any $i \neq j$. Together with \eqref{assm} this implies that
\begin{equation}\label{pos}
M_{ij} > 0, \text{ for all } \, i, j = 1, \ldots, n.
\end{equation}

Denote the $n \times 2$ matrix composed by $w_1, \ldots, w_n$ as rows by $W$ and the column vector of coefficients $z_1, \ldots, z_n$ by $z$.

Then introduce an $n \times n$ matrix $G = W W^T - z z^T$. Since $\mathrm{rank} \, W \leq 2$,
\begin{equation}\label{ipls}
i_+(G) \leq 2,
\end{equation}
where $i_+(G)$ is the number of positive eigenvalues of $G$.

Computing the trace
\begin{equation}\label{trc}
\mathrm{tr} \, G = \sum_{i = 1}^{n} (| w_i |^2 - (|w_i| - \frac{2}{n})^2) = \frac{4}{n}
\end{equation}
and combining this with \eqref{ipls} we get
\begin{equation}\label{l1ineq}
\lambda_1(G) \geq \frac{2}{n},
\end{equation}
where $\lambda_1(G)$ is the largest eigenvalue of $G$.

Now, define the matrix $M = (M_{ij})_{i,j=1}^n = G + \frac{2}{n^2} E$, where $E$ is the $n \times n$ all-ones matrix.

For the all-ones vector $\mathbf{1} \in \mathbb{R}^{n \times 1}$ we have
\begin{equation}\label{f1}
G \mathbf{1} = (W W^T - z z^T) \mathbf{1} = \mathbf{0}.
\end{equation}
Therefore, $\mathrm{Span}(\mathbf{1})$ and $\mathrm{Span}(\mathbf{1})^\perp$ (further, simply $\mathbf{1}^{\perp}$) are invariant spaces of~$G$, and by \eqref{l1ineq}
\begin{equation}\label{f2}
\lambda_1(G|_{\mathbf{1}^{\perp}}) \geq \frac{2}{n}.
\end{equation}

On the other hand,
\begin{equation}\label{f3}
M \mathbf{1} = (G + \frac{2}{n^2} E) \mathbf{1} = \frac{2}{n^2} E \mathbf{1} = \frac{2}{n} \mathbf{1},
\end{equation}
and by \eqref{pos} $M$ is entrywise positive. Then by the Perron-Frobenius theorem we have
\begin{equation}\label{f4}
\lambda_1(M|_{\mathbf{1}^{\perp}}) < \frac{2}{n},
\end{equation}
which contradicts \eqref{f2} because of the fact that $G|_{\mathbf{1}^{\perp}} = M|_{\mathbf{1}^{\perp}}$. Therefore, the assumption that \eqref{trgtnew} does not hold for any $i \neq j$ is wrong, and we have proved that \eqref{trgt} is satisfied for some distinct $i$ and $j$.

Let us consider the corresponding submatrix
\begin{equation*}
A_{ij} = \left( \begin{matrix}
a_{i1} & a_{i2} \\
a_{j1} & a_{j2}
\end{matrix} \right).
\end{equation*}

The characteristic polynomial of its Gram matrix $\Gamma_{ij} = A_{ij} A_{ij}^T$ is 
\begin{equation*}
\chi(\lambda) = (| a_i |^2 - \lambda) (| a_j |^2 - \lambda) - (a_i, a_j)^2.
\end{equation*}

In Case B, $\mathrm{tr} \, \Gamma_{ij} = |a_i|^2 + |a_j|^2 > 2/n$. Hence $\lambda_1(\Gamma_{ij}) > 1/n$.

Since by \eqref{trgt} $\chi(\frac{1}{n}) \geq 0$ and $\chi(\frac{1}{n}) = (\lambda_1(\Gamma_{ij}) - \frac{1}{n})(\lambda_2(\Gamma_{ij}) - \frac{1}{n})$, we get
\begin{equation*}
\lambda_2(\Gamma_{ij}) \geq \frac{1}{n}
\end{equation*}
and the desired inequality
\begin{equation*}
||A_{ij}^{-1}||_2 \leq \sqrt{n}.
\end{equation*}

Let us now analyze the equality case. It consists in the fact that 
\begin{equation}\label{mx}
\max_{i \neq j} \lambda_2(\Gamma_{ij}) = \frac{1}{n}.
\end{equation}

In terms of matrix $M$, \eqref{mx} means that $M$ is entrywise nonnegative and has at least one zero entry.

Repeating the arguments from \eqref{f1})-\eqref{f3} and replacing inequality \eqref{f4} with its non-strict version, we get
\begin{equation*}
\lambda_1(G|_{\mathbf{1}^{\perp}}) = \frac{2}{n}.
\end{equation*}

Together with \eqref{ipls} and \eqref{trc}, this gives the following
\begin{equation*}
\lambda_1(G) = \lambda_2(G) = \frac{2}{n}, \; \lambda_3(G) = \ldots = \lambda_n(G) = 0.
\end{equation*}

Since $G|_{\mathbf{1}^{\perp}} = M|_{\mathbf{1}^{\perp}}$ and by \eqref{f3},
\begin{equation*}
\lambda_1(M) = \lambda_2(M) = \lambda_3(M) = \frac{2}{n}, \; \lambda_4(M) = \ldots = \lambda_n(M) = 0.
\end{equation*}

%%%%%%%%%%%%%%%%%%%%%%%%%%%%%%%%%%%%%%%%%%%%%%%%%%%

Since $M$ is real symmetric and entrywise nonnegative, after a simultaneous permutation of rows and columns it is block diagonal with irreducible nonnegative diagonal blocks. By the Perron-Frobenius theorem, each nonzero irreducible block has a simple positive dominant eigenvalue (see, for example, \cite{BP1994}). Since the spectrum of $M$ contains exactly three positive eigenvalues, all equal to $2/n$, and since $M_{ii} > 0$ for every $i$, there are exactly three such blocks, each of rank one.

Therefore, $M = u_1 u_1^T + u_2 u_2^T + u_3 u_3^T$ for certain nonnegative orthogonal vectors $u_1, u_2, u_3 \in \mathbb{R}^{n \times 1}$. Since $\mathbf{1} \in \mathrm{Span}(u_1, u_2, u_3)$, we can state that up to a simultaneous permutation of rows and columns
\begin{equation}\label{blck}
M = \left( \begin{matrix}
\frac{2}{pn}E_p & 0 & 0 \\
0 & \frac{2}{qn}E_q & 0 \\
0 & 0 & \frac{2}{rn}E_r
\end{matrix} \right),
\end{equation}
where $p + q + r = n$ and $E_p, E_q, E_r$ are the all-ones matrices of the corresponding sizes.

For vector $w_i$ corresponding to the first block (cluster) of \eqref{blck} we have
\begin{equation*}
|w_i|^2 - (|w_i| - \frac{2}{n})^2 + \frac{2}{n^2} = \frac{2}{pn},
\end{equation*}
and thus
\begin{equation*}
|w_i| = \frac{1}{2n} + \frac{1}{2p}.
\end{equation*}

If $w_j$ corresponds to the same block, the following holds
\begin{equation*}
(w_i, w_j) - (|w_i| - \frac{2}{n})(|w_j| - \frac{2}{n}) + \frac{2}{n^2} = \frac{2}{pn},
\end{equation*}
\begin{equation*}
(w_i, w_j) = |w_i||w_j|.
\end{equation*}
And since $|w_i| = |w_j|$,
\begin{equation*}
w_i = w_j,
\end{equation*}
which establishes the necessity of the formulated equality condition.

Conversely, let $w_1, \ldots, w_n$ be the squares of the rows of $A$ possessing the described three-cluster structure. Considering the triangle formed by the sums of vectors of the three given clusters and applying the cosine rule, we obtain formula \eqref{blck} and thus the sufficiency of the formulated equality condition.

Proposition \ref{pr} is proven.

\section{Acknowledgements}

I would like to thank Igor Makhlin, Stanislav Budzinskiy, Aleksei Ustimenko and the authors of \cite{GTZ1997} and \cite{SP2026} for fruitful discussions.

\bibliographystyle{plain}
\bibliography{lit}

\end{document}